\newtheorem{Theorem}{Theorem}[section]
\newtheorem{Lemma}{Lemma}[section]
\newtheorem{Proposition}{Proposition}[section]
\newtheorem{Remark}{Remark}[section]
\newtheorem{Definition}{Definition}[section]
\numberwithin{equation}{section}
\newcommand{\funcion}[5]{%
{\setlength{\arraycolsep}{2pt}
	\begin{array}{r@{}ccl}
	#1\colon & #2 & \longrightarrow & #3\\
	& #4 & \longmapsto & #5
\end{array}}}
\newcommand{\func}[3]{#1\colon  #2  \to  #3}
\newcommand{\Kp}{\mathscr{K}}
\DeclareMathOperator{\inte}{int}
\begin{document}

\title[compact countable subsets of a metric space]
{Countable ordinal spaces and compact countable  
subsets of a metric space}

\author[B. \'Alvarez-Samaniego]{Borys \'Alvarez-Samaniego}
\address{N\'ucleo de Investigadores Cient\'{\i}ficos\\
	Universidad Central del Ecuador (UCE)\\
	Quito, Ecuador}
\email{balvarez@uce.edu.ec, borys\_yamil@yahoo.com, balvarez@impa.br}

\author[A. Merino]{Andr\'es Merino}
\address{Escuela de Ciencias F\'isicas y Matem\'atica\\
    Facultad de Ciencias Exactas y Naturales\\
    Pontificia Universidad Cat\'olica del Ecuador (PUCE)\\
    Quito, Ecuador}
\email{aemerinot@puce.edu.ec}

\date{July 27, 2018.} 

\begin{abstract}
We show in detail that every compact countable subset of a metric space is 
homeomorphic to a countable ordinal number, which extends a result given by 
Mazurkiewicz and Sierpinski for finite-dimensional Euclidean spaces.  In order 
to achieve this goal, we use Transfinite Induction to construct a specific 
homeomorphism.  In addition, we prove that for all metric space $(E,d)$, the 
cardinality of the set of all the equivalence classes $\Kp_E$, up to homeomorphisms, 
of compact countable subsets of $E$ is less than or equal to $\aleph_1$, i.e. 
$|\Kp_E| \le \aleph_1$.  We also show that for all cardinal number $\kappa$ smaller 
than or equal to $\aleph_1$, there exists a metric space $(E_{\kappa}, d_{\kappa})$ 
such that $|\Kp_{E_{\kappa}}|= \kappa$.  
\end{abstract}

\subjclass[2010]{54A25; 03E15}
\keywords{Cantor-Bendixson's derivative; ordinal numbers; ordinal topology}

\maketitle

\section{Introduction} \label{sec:intro}

The study of homeomorphisms between compact countable subsets of a topological space 
and countable ordinal numbers was began by S. Mazurkiewicz and W. Sierpinski 
in~\cite{Sierpinski}. More precisely, they showed that for every compact countable 
subset of an $n$-dimensional Euclidean space, there exists a homeomorphism between this 
subset and some countable ordinal number. Moreover, a detailed proof of this last result 
when the Euclidean space, under consideration, is the real line, was given by the authors 
in~\cite{Alvarez-Merino2016}. Some related propositions can also be found 
in~\cite{Baker1973b, Baker1973a}.  The main result of Section~\ref{sec:CB} is 
Theorem~\ref{Theorem} below, which extends Lemma~3.6 in~\cite{Alvarez-Merino2016} for 
an arbitrary metric space.  It is worth mentioning that Theorem 2 in~\cite{Baker1972} 
considers compact, dispersed topological spaces with some additional properties, while 
Theorem~\ref{Theorem} below regards the case of a metric space.  Furthermore, it is 
stated in~\cite{Gillam2005}, without proof, that it is a known fact which can be proved 
by induction that  $Y$ is a countable locally compact space if and only if $Y$ is 
homeomorphic to some countable ordinal number (with the order topology).  In this way, 
Lemmas~\ref{L1}, \ref{L2} and~\ref{L3}, proved in Section~\ref{sec:CB}, are the 
comprehensive induction steps required in the Transfinite Induction used in the 
proof of Theorem~\ref{Theorem}. Section~\ref{sec:CP} is devoted to study the 
cardinality of the set of all the equivalence classes $\Kp_M$, up to homeomorphisms, of 
compact countable subsets of a metric space $(M, d)$.  Propositions~\ref{prop:3.01} and 
\ref{prop:3.02} are used in the proof of Theorem~\ref{teo:P4}, where it is shown that 
for all metric space $(E, d)$, the cardinality of $\Kp_E$ is less than or equal to 
$\aleph_1$.  Propositions~\ref{prop:3.03} to \ref{prop:3.05} shows that 
for all cardinal number $\kappa \le \aleph_1$, there exists a metric space 
$(E_{\kappa}, d_{\kappa})$ such that the cardinality of the set $\Kp_{E_{\kappa}}$ 
is equal to $\kappa$.  Proposition~\ref{prop:3.06} says that there exists a countable 
metric space $(F,d_F)$ such that $|\Kp_F|=\aleph_1$.  Finally, 
Proposition~\ref{prop:3.07} asserts that there is an uncountable metric 
space $(G,d_G)$ such that $|\Kp_G|=\aleph_0$.

We denote by ${\bf{OR}}$, the class of all ordinal numbers. In addition, $\omega$ 
represents the set of all natural numbers and $\omega_1$ is the set of all countable 
ordinal numbers. Further, we consider any ordinal number as being a topological space, 
endowed with its natural order topology.  In order to describe this last topology, for 
all $\alpha,\beta\in{\bf OR}$ such that $\alpha \le \beta$, we write 
\begin{align*}
	(\alpha,\beta) & := \{\gamma\in{\bf OR}: \alpha<\gamma<\beta \},\\
	[\alpha,\beta) & := \{\gamma\in{\bf OR}: \alpha\leq\gamma<\beta \}. 
\end{align*}
Thus, for any $\delta\in{\bf OR}$, the natural order topology for $\delta$ is given by the 
following topological basis
\begin{equation*}\label{eq:TO}
	\{(\beta,\gamma):\beta,\gamma\in{\bf OR},\ \beta<\gamma\leq \delta\}
	\cup\{[0,\beta):\beta\in{\bf OR},\ \beta\leq \delta\}.
\end{equation*}

Next definition was first introduced by G. Cantor in~\cite{Cantor1879}.  
\begin{Definition}[Cantor-Bendixson's derivative]\label{def:D_CB}
Let $A$ be a subset of a topological space. For a given ordinal number 
$\alpha \in \bf{OR}$, we define, using Transfinite Recursion, the 
\emph{$\alpha$-th derivative} of $A$, written $A^{(\alpha)}$, as follows:
\begin{itemize}
  \item $A^{(0)}=A$,
  \item $A^{(\beta+1)}=(A^{(\beta)})'$, for all ordinal number $\beta$,
  \item $\displaystyle A^{(\lambda)}=\bigcap_{\gamma<\lambda} A^{(\gamma)}$, 
        for all limit ordinal number $\lambda\neq 0$,
\end{itemize}
where $B'$ denotes the derived set of $B$, i.e., the set of all 
limit points (or accumulation points) of the subset $B$.
\end{Definition}
\begin{Remark} \label{R1}
Given any subset of a $T_1$ topological space, its derived set is closed. 
As a consequence of this last result, we have that if $F$ is a closed subset of a 
$T_1$ topological space, then $(F^{(\alpha)})_{\alpha\in{\bf OR}}$ is a decreasing 
family of closed subsets.  
\end{Remark}
%
Moreover, $\mathcal{P}(C)$ and $|C|$ denote, respectively, the power set and the 
cardinality of the set $C$. We also write $A \sim B$ if 
there is a homeomorphism between the topological spaces $A$ and $B$. If $(X,\tau)$ is a 
topological space, then $\mathcal{K}_X$ represents the set of all compact countable 
subsets of $X$, where a countable set is either a finite set or a countably infinite set, 
and $\Kp_X := \mathcal{K}_X /\!\sim$ denotes the set of all the equivalence classes, 
up to homeomorphisms, of elements of $\mathcal{K}_X$.  If $(E,d)$ is a metric 
space, $x \in E$ and $r>0$, we denote by $B(x,r)$ and $B[x,r]$ the open and closed 
balls, centered at $x$ with radius $r>0$, respectively. Furthermore, for all 
$Y \neq \varnothing$, $\rho_Y$ is used to designate the discrete metric on the 
set $Y$.  We now give the following definition.
\begin{Definition}[Cantor-Bendixson's characteristic]
Let $D$ be a subset of a topological space such that there exists an ordinal number 
$\beta \in \bf{OR}$ with the property that $D^{(\beta)}$ is finite.  We say that 
$(\alpha,p)\in {\bf OR} \times\omega$ is the \emph{Cantor-Bendixson characteristic} of 
$D$ if $\alpha$ is the smallest ordinal number such that $D^{(\alpha)}$ is finite 
and $|D^{(\alpha)}|=p$. In this case, we write $\mathcal{CB}(D)=(\alpha,p)$.
\end{Definition}
For the sake of completeness, we give here the proof of the following theorem, which 
was first introduced by G. Cantor in~\cite{Cantor1883s} for an $n$-dimensional 
Euclidean space.  It deserves to point out that there are some known extensions,  
considering topological spaces, of the next result.
\begin{Theorem} \label{T1}
Let $(X,\tau)$ be a Hausdorff space. For all $K \in \mathcal{K}_X$, there exists 
$\alpha \in \omega_1$ such that $K^{(\alpha)}$ is a finite set.  
\end{Theorem}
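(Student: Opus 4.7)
The plan is to argue by contradiction: assume that $K^{(\alpha)}$ is infinite for every $\alpha \in \omega_1$. Since $X$ is Hausdorff (hence $T_1$) and $K$ is compact, $K$ is closed in $X$, so by Remark~\ref{R1} the transfinite sequence $(K^{(\alpha)})_{\alpha\in{\bf OR}}$ is a $\subseteq$-decreasing family of closed subsets of $K$. First I would rule out that this family is strictly decreasing on $\omega_1$: if $K^{(\alpha)}\supsetneq K^{(\alpha+1)}$ for every $\alpha<\omega_1$, picking $x_\alpha\in K^{(\alpha)}\setminus K^{(\alpha+1)}$ yields $\aleph_1$ pairwise distinct elements of $K$ (for $\alpha<\beta$, $x_\alpha\notin K^{(\alpha+1)}\supseteq K^{(\beta)}\ni x_\beta$), contradicting the countability of $K$. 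Hence there exists $\alpha<\omega_1$ with $K^{(\alpha)}=K^{(\alpha+1)}=(K^{(\alpha)})'$, i.e.\ $K^{(\alpha)}$ has no isolated points in its subspace topology.

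Next I would extract the contradiction from the fact that $K^{(\alpha)}$ is then a nonempty (since infinite), perfect, compact Hausdorff space which is also countable (as a subset of $K$). The key tool is the Baire category theorem for compact Hausdorff spaces: in such a space every nonempty open set is nonmeager. Because $X$ is Hausdorff, every singleton $\{x\}\subseteq K^{(\alpha)}$ is closed, and since $x$ is not isolated in $K^{(\alpha)}$, the interior of $\{x\}$ in $K^{(\alpha)}$ is empty, so $\{x\}$ is nowhere dense in $K^{(\alpha)}$. Writing the countable set $K^{(\alpha)}$ as the union of its singletons would then present a nonempty compact Hausdorff space as a countable union of nowhere dense sets, contradicting Baire. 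This contradiction shows that some $K^{(\alpha)}$ with $\alpha<\omega_1$ must be finite, which is the desired conclusion.

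A minor verification to dispatch along the way is that the Cantor--Bendixson derivative of $K^{(\alpha)}$ taken inside $X$ agrees with the derived set of $K^{(\alpha)}$ viewed as a topological space on its own; this is immediate because $K^{(\alpha)}$ is closed in $X$, so every limit point of $K^{(\alpha)}$ in $X$ already lies in $K^{(\alpha)}$, and the neighborhood conditions defining a limit point transfer between the two topologies.

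The main obstacle I expect is invoking the right form of Baire's theorem: one cannot appeal to the metric version, since $X$ is only assumed Hausdorff, so one should either cite or briefly reprove the compact-Hausdorff version via a nested sequence of nonempty closed sets enjoying the finite intersection property. Once that is in hand, the transfinite-cardinality argument of the first paragraph together with the perfect-set contradiction of the second closes the proof cleanly.
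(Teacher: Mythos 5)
Your proof is correct and rests on the same two pillars as the paper's: that $\aleph_1$ many strict decreases of the derivatives would contradict the countability of $K$, and that a nonempty countable compact perfect Hausdorff space cannot exist. The paper applies the perfect-set fact at every stage first (citing that nonempty compact perfect Hausdorff spaces are uncountable, so $K^{(\alpha+2)}\varsubsetneq K^{(\alpha+1)}$ for all $\alpha\in\omega_1$) and then derives the cardinality contradiction via a choice function on the differences, whereas you reverse the order --- deducing stabilization from countability and then refuting the resulting perfect derivative via the Baire category theorem for compact Hausdorff spaces, which you reprove rather than cite; both arrangements are sound.
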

\begin{proof}
Let $K \in \mathcal{K}_X$.  We suppose, for a contradiction, that for all 
countable ordinal number $\gamma$, $K^{(\gamma)}$ is an infinite set.  Let 
$\alpha \in \omega_1$. Since $\alpha+1 \in \omega_1$, we have that 
$K^{(\alpha+1)}$ is an infinite set, and thus it is a nonempty set.  By 
Remark~\ref{R1}, $K^{(\alpha+1)} \subseteq K$.  Then, $K^{(\alpha+1)}$ is 
a countable set.  Using the fact that every nonempty, compact, perfect, 
Hausdorff space is uncountable, we obtain that 
$K^{(\alpha+2)}\neq K^{(\alpha+1)}$. Thus, by using again Remark~\ref{R1}, 
we get $K^{(\alpha+2)}\varsubsetneq K^{(\alpha+1)}$.  We now define 
\begin{equation*}
    K_\alpha:=K^{(\alpha+1)}\smallsetminus K^{(\alpha+2)}\neq\varnothing.
\end{equation*}
Then, $\{K_\gamma: \gamma \in \omega_1\}$ is a family of nonempty sets.  By 
the Axiom of Choice, there exists a function 
\begin{equation*}
    \func{f}{\omega_1}{\bigcup_{\gamma \in \omega_1} K_{\gamma}}
\end{equation*}
such that for all $\gamma \in \omega_1$, $f(\gamma) \in K_{\gamma}$. We 
claim that $f$ is injective.  In fact, let $\beta, \delta \in \omega_1$ be 
such that $\beta < \delta$.  Thus, $\beta+2\leq\delta+1$. By Remark~\ref{R1},
\begin{equation*}
    K^{(\delta+1)}\subseteq K^{(\beta+2)}.
\end{equation*}
Then, 
\begin{equation*}
    K_\beta\cap K_\delta=(K^{(\beta+1)}\smallsetminus K^{(\beta+2)})
    \cap (K^{(\delta+1)}\smallsetminus K^{(\delta+2)})=\varnothing.
\end{equation*}
Since $f(\beta) \in K_{\beta}$ and $f(\delta) \in K_{\delta}$, it 
follows that $f(\beta) \neq f(\delta)$.  Hence, $f$ is a one-to-one function. 
Therefore, 
\begin{equation*}
   \aleph_1 := |\omega_1| 
            \leq \left|\bigcup_{\gamma \in \omega_1} K_{\gamma}\right| 
            \leq |K|\leq \aleph_0,
\end{equation*}
giving a contradiction. This finishes the proof of the theorem.
\end{proof}
\begin{Remark} \label{R2}
Last theorem implies that if $(X,\tau)$ is a Hausdorff space and 
$K \in \mathcal{K}_X$, then $\mathcal{CB}(K)$ is well-defined and furthermore   
$\mathcal{CB}(K) \in \omega_1 \times \omega$.
\end{Remark}
%
\section{Existence of homeomorphisms} \label{sec:CB}

\begin{Lemma}\label{L1}
If $(E,d)$ is a metric space, $K\in \mathcal{K}_E$, and $\mathcal{CB}(K)=(1,1)$, then
\begin{equation*}
    K\sim \omega+1.
\end{equation*}
\end{Lemma}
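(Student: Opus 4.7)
The hypothesis $\mathcal{CB}(K) = (1,1)$ says that $K = K^{(0)}$ is infinite, that the derived set $K' = K^{(1)}$ is finite, and that $|K'| = 1$. Write $K' = \{p\}$. Then $K$ is a countably infinite set all of whose points, except $p$, are isolated in $K$: indeed, if $x \in K \setminus \{p\}$ failed to be isolated, it would belong to $K' = \{p\}$. So the plan is to enumerate $K \setminus \{p\} = \{x_n : n \in \omega\}$ in such a way that $x_n \to p$ in the metric $d$, and then map $n \mapsto x_n$ and $\omega \mapsto p$.

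The first substantive step is the key topological fact: for every $\varepsilon > 0$, the set $A_\varepsilon := \{x \in K : d(x,p) \ge \varepsilon\}$ is finite. The argument is that $A_\varepsilon$ is a closed subset of the compact set $K$, hence compact; any accumulation point of $A_\varepsilon$ would lie in $K' = \{p\}$, but $p \notin A_\varepsilon$, so $A_\varepsilon$ has no accumulation points, and a compact subset of a Hausdorff space with no accumulation points must be finite.

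Using this, I would build the enumeration as follows. Set $B_k := \{x \in K \setminus \{p\} : d(x,p) \ge 1/k\}$ for $k \ge 1$. Each $B_k$ is finite, the $B_k$ are nested increasingly, and their union is $K \setminus \{p\}$ (which is infinite). Enumerate the elements of $B_1$ first, then those of $B_2 \setminus B_1$, and so on, producing a bijective enumeration $(x_n)_{n \in \omega}$ of $K \setminus \{p\}$. By construction, for each fixed $k$ there are only finitely many $n$ with $x_n \in B_k$, so eventually $d(x_n, p) < 1/k$; hence $x_n \to p$.

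Finally, define $\func{\phi}{\omega+1}{K}$ by $\phi(n) = x_n$ for $n \in \omega$ and $\phi(\omega) = p$; this is a bijection. Continuity at each isolated point $n \in \omega$ of $\omega+1$ is automatic. At $\omega$, a basic neighborhood has the form $(m, \omega]$, and we must check $\phi((m,\omega]) \subseteq U$ for any open $U \ni p$ in $K$; this follows directly from $x_n \to p$. Thus $\phi$ is a continuous bijection from the compact space $\omega+1$ to the Hausdorff space $K$, so it is a homeomorphism. The only real obstacle is producing the enumeration with $x_n \to p$, and that is handled by the finiteness of each $A_\varepsilon$; everything else is formal.
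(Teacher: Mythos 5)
Your proof is correct, and it is essentially the mirror image of the paper's argument. The paper fixes an \emph{arbitrary} bijection $g$ from $K\smallsetminus\{p\}$ onto $\omega$, extends it to $f\colon K\to\omega+1$ by sending $p$ to $\omega$, and checks continuity of $f$ at $p$ directly: the set $\{z\in K: f(z)\le n\}$ is finite because $f$ is injective, hence it stays a positive distance $r$ away from $p$, so $K\cap B(p,r)$ lands in $(n,\omega+1)$ --- no convergence statement about the enumeration is ever needed. You instead invest extra work (finiteness of each $A_\varepsilon$, the shell-by-shell enumeration) to produce a \emph{particular} bijection with $x_n\to p$, and then map in the opposite direction, $\phi\colon\omega+1\to K$. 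Both versions close with the same fact that a continuous bijection from a compact space onto a Hausdorff space is a homeomorphism; the paper applies it using compactness of $K$, you apply it using compactness of $\omega+1$. What the paper's orientation buys is economy: any bijection works, so the careful enumeration disappears. What yours buys is a more transparent picture --- it exhibits $K$ explicitly as a sequence converging to $p$ together with its limit --- and your finiteness observation about $A_\varepsilon$ is precisely the kind of metric localization around a point of the derived set that the paper exploits later in Lemmas~\ref{L2} and~\ref{L3}. Both are complete proofs of the lemma.
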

\begin{proof}
Since $\mathcal{CB}(K)=(1,1)$, there exists $x\in E$ such that $K'=\{x\}$. Moreover, 
we see that $K=K^{(0)}$ is infinite. Then, $K\smallsetminus K'$ is a countably 
infinite set.  Thus, there is a bijection $g$ from $K\smallsetminus K'$ 
onto $\omega$. We now define the following function
\begin{equation*}
    \funcion{f}{K}{\omega+1}{z}{f(z)=\begin{cases}
    g(z), & \text{if }z\neq x,\\
    \omega, & \text{if }z= x.
    \end{cases}}
\end{equation*}
From the definition of $f$, we obtain directly that $f$ is a bijective function.  We 
will now show that $f$ is continuous. Since every point belonging to 
$K\smallsetminus K'$ is an isolated point of $K$, it follows that $f$ is continuous 
at every point of $K\smallsetminus K'$.  Thus, it remains to show the continuity 
of $f$ at the point $x$.  We take an open basic neighborhood $V$ of 
$f(x)=\omega$ with regard to the order topology of $\omega + 1$.  We will now show 
that $f^{-1}(V)$ is a neighborhood of $x$. If $V = [0, \beta)$, we have that 
$\beta = \omega +1$. Thus, $V= \omega+1$ and $f^{-1}(V)=K$ is a neighborhood of $x$.  
On the other hand, if $V = (n,\alpha)$, then 
\begin{equation*}
    n <\omega <\alpha \leq \omega+1.
\end{equation*}
Therefore, $n\in\omega$ and $\alpha =\omega+1$. We now define the following set
\begin{equation*}
    A :=\{z\in K: f(z)\leq n\}.
\end{equation*}
Thus, $x\not\in A$.  Moreover, since $f$ is an injective function, we see that 
$A$ is a finite set. Let us take $r:=\min\{d(z,x):z\in A\} >0$.  Then,
\begin{equation*}
    K\cap B(x,r) \subseteq f^{-1}((n,\omega +1)).
\end{equation*}
In fact, if $z\in K$ satisfies $d(z,x)<r$, then $z\not\in A$. Hence, $f(z)>n$.  In 
addition, using the definition of function $f$, we see directly that 
$f(z) < \omega + 1$.  Thus, $f(z) \in (n, \omega + 1)=V$.  Consequently, 
$f^{-1}((n,\omega +1))$ is a neighborhood of $x$. Therefore, $f$ is continuous 
at the point $x$.  We thus conclude that $f$ is continuous at every point of its 
domain.  Then, $f$ is a continuous function.  Finally, since $f$ is a continuous 
bijective function, $K$ is compact and $\omega+1$ is a Hausdorff space, it follows 
that $f$ is a homeomorphism. In conclusion, $K \sim \omega+1$.
\end{proof}

The next lemma extends Lemma~3.4 in~\cite{Alvarez-Merino2016} to the case of an 
arbitrary $T_1$ topological space.

\begin{Lemma}\label{Lemtech}
Let $K$ and $F$ be closed subsets of a $T_1$ topological space such that 
$K \cap F$ = $K \cap \inte(F)$, where $\inte(F)$ is the set of all interior points 
of $F$.  Then, for all $\alpha \in \mathbf{OR}$, we have that 
\begin{equation}\label{eq:lemtech}
    (K \cap F)^{(\alpha)} = K^{(\alpha)} \cap F.
\end{equation} 
\end{Lemma}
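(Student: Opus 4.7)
The plan is to prove the equality by transfinite induction on $\alpha$, using the fact (from Remark~\ref{R1}) that each iterated derivative of a closed set in a $T_1$ space is again closed. The base case $\alpha=0$ is immediate since $(K\cap F)^{(0)}=K\cap F=K^{(0)}\cap F$. For the limit step with $\lambda\neq 0$, the inductive hypothesis gives
\begin{equation*}
    (K\cap F)^{(\lambda)}=\bigcap_{\gamma<\lambda}(K\cap F)^{(\gamma)}
    =\bigcap_{\gamma<\lambda}\bigl(K^{(\gamma)}\cap F\bigr)
    =\Bigl(\bigcap_{\gamma<\lambda}K^{(\gamma)}\Bigr)\cap F
    =K^{(\lambda)}\cap F,
\end{equation*}
so that case is automatic.

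The substantive step is the successor case: assuming \eqref{eq:lemtech}, I would prove $(K^{(\alpha)}\cap F)'=(K^{(\alpha)})'\cap F$. The inclusion $\subseteq$ is soft: any limit point of $K^{(\alpha)}\cap F$ is a limit point of $K^{(\alpha)}$, and since $F$ is closed it also belongs to $F$. The reverse inclusion is where the hypothesis $K\cap F=K\cap\inte(F)$ does its work. Take $x\in (K^{(\alpha)})'\cap F$. Because $K^{(\alpha)}$ is closed (Remark~\ref{R1}), $x\in K^{(\alpha)}\subseteq K$, so $x\in K\cap F=K\cap\inte(F)$, which gives $x\in\inte(F)$. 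Now, for an arbitrary open neighborhood $U$ of $x$, the set $U\cap\inte(F)$ is also an open neighborhood of $x$, and from $x\in (K^{(\alpha)})'$ there is some $y\neq x$ with $y\in K^{(\alpha)}\cap U\cap\inte(F)\subseteq (K^{(\alpha)}\cap F)\cap U$. Hence $x\in(K^{(\alpha)}\cap F)'$. Combining with the inductive hypothesis yields $(K\cap F)^{(\alpha+1)}=((K\cap F)^{(\alpha)})'=(K^{(\alpha)}\cap F)'=(K^{(\alpha)})'\cap F=K^{(\alpha+1)}\cap F$.

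The hard part, or rather the only nontrivial point, is this use of $\inte(F)$ in the successor step: without the assumption $K\cap F=K\cap\inte(F)$, a limit point $x$ of $K^{(\alpha)}$ lying in $F$ could be approached by points of $K^{(\alpha)}$ that lie outside $F$, breaking the inclusion $(K^{(\alpha)})'\cap F\subseteq (K^{(\alpha)}\cap F)'$. The hypothesis lets us shrink every test neighborhood of $x$ into $\inte(F)$ while keeping it open, which is exactly what is needed to witness $x$ as a limit point of $K^{(\alpha)}\cap F$. Once this observation is in hand, the transfinite induction closes routinely.
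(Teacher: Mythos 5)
Your proof is correct and follows essentially the same route as the paper: transfinite induction where the limit and base cases are formal, and the successor case uses the hypothesis $K\cap F=K\cap\inte(F)$ together with the closedness of $K^{(\alpha)}$ (Remark~\ref{R1}) to shrink a test neighborhood into $\inte(F)$ and witness $x$ as a limit point of $K^{(\alpha)}\cap F$. The only cosmetic difference is that you isolate the identity $(K^{(\alpha)}\cap F)'=(K^{(\alpha)})'\cap F$ before applying the inductive hypothesis, whereas the paper interleaves the two.
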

\begin{proof}
We  will use Transfinite Induction.  
\begin{itemize}[leftmargin=*]
\item The case $\alpha = 0$ follows directly. 
\item We assume that the result holds for a given $\alpha \in \mathbf{OR}$, 
      i.e.,  $(K \cap F)^{(\alpha)} = K^{(\alpha)} \cap F$.  Then, 
      \begin{equation*}
        (K \cap F)^{(\alpha +1)} = \left( (K \cap F)^{(\alpha)} \right)'
        = ( K^{(\alpha)} \cap F )'
        \subseteq ( K^{(\alpha)} )' \cap F'
        \subseteq K^{(\alpha +1)} \cap F,
      \end{equation*}
      where in the last expression we have used the fact that $F$ is closed.  To show  
      the other inclusion, let $x \in K^{(\alpha +1)} \cap F$.  Since $K$ is a 
      closed subset of a $T_1$ topological space, using Remark~\ref{R1}, it follows 
      that $x \in K \cap F = K \cap \inte(F)$.  Therefore, there exists a 
      neighborhood $U$ of $x$ such that $U \subseteq F$.  Let $V$ be a neighborhood 
      of $x$. We now take $W:=U\cap V$.  We see that $W$ is also a neighborhood of $x$. 
      Then,
      \begin{align*}
        \varnothing & \neq \big( W \smallsetminus \{ x\} \big) \cap K^{(\alpha)}\\
        & =\big( W \smallsetminus \{ x\} \big) \cap K^{(\alpha)} \cap F \\
        & = \big( W \smallsetminus \{ x\} \big) \cap (K \cap F)^{(\alpha)}\\
        & \subseteq \big( V \smallsetminus \{ x\} \big) \cap (K \cap F)^{(\alpha)}.
      \end{align*}
      Hence, $x \in (K \cap F)^{(\alpha +1)}$.  Thus, 
      $K^{(\alpha +1)} \cap F \subseteq (K \cap F)^{(\alpha +1)}$. Therefore, 
      $(K \cap F)^{(\alpha +1)} = K^{(\alpha +1)} \cap F$.
\item Lastly, let $\lambda \neq 0$ be a limit ordinal number.  We assume that for 
      all $\beta \in \mathbf{OR}$ such that $\beta < \lambda$, 
      $(K \cap F)^{(\beta)} = K^{(\beta)} \cap F$.  Hence, 
      \begin{equation*}
        (K \cap F)^{(\lambda)} = \bigcap_{\beta < \lambda} (K \cap F)^{(\beta)}
        = \bigcap_{\beta < \lambda} (K^{(\beta)} \cap F)
        = \bigcap_{\beta < \lambda} K^{(\beta)} \cap F 
        = K^{(\lambda)} \cap F.
      \end{equation*}
\end{itemize}
This finishes the proof.
\end{proof}

\begin{Lemma}\label{L2}
Let $(E,d)$ be a metric space and let $\alpha>1$ be a countable ordinal number.
Suppose that for all ordinal number $\beta \in \bf{OR}$ such that $0<\beta<\alpha$ 
and for all $\widetilde{K}\in\mathcal{K}_E$ with  
$\mathcal{CB}(\widetilde{K})=(\beta,p) \in {\bf OR} \times \omega$, we have 
that $\widetilde{K}\sim \omega^\beta\cdot p+1$.  Then, for all $K\in\mathcal{K}_E$ 
such that $\mathcal{CB}(K)=(\alpha,1)$, we get  
\begin{equation*}
    K\sim \omega^\alpha+1.
\end{equation*}
\end{Lemma}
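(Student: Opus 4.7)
The plan is to build an explicit homeomorphism $f\colon K \to \omega^\alpha + 1$ by carving $K \setminus \{x\}$, where $\{x\} = K^{(\alpha)}$, into a sequence of clopen pieces shrinking to $x$, identifying each piece with a matching clopen subinterval of $[0, \omega^\alpha)$ through the inductive hypothesis, and sending $x$ to $\omega^\alpha$. The four steps are: construct the shells, compute their Cantor--Bendixson characteristics, build the matching decomposition of $[0, \omega^\alpha]$, and glue.

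First I would choose radii $r_n \searrow 0$ with $r_1$ large enough that $K \subseteq B(x, r_1)$ and with $K \cap \partial B(x, r_n) = \varnothing$ for every $n$ (possible since $K$ is countable). Set $S_n := K \cap \bigl(B[x, r_n] \setminus B(x, r_{n+1})\bigr)$. A double application of Lemma~\ref{Lemtech}, once to $B[x, r_n]$ and once to $E \setminus B(x, r_{n+1})$ (both qualified by the no-boundary condition), yields $S_n^{(\beta)} = K^{(\beta)} \cap \bigl(B[x, r_n] \setminus B(x, r_{n+1})\bigr)$ for every ordinal $\beta$; in particular $S_n^{(\alpha)} = \varnothing$. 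When $\alpha = \gamma + 1$ is a successor, $K^{(\gamma)} \setminus \{x\}$ is a sequence converging to $x$ (Lemma~\ref{L1} applied to $K^{(\gamma)}$ gives $K^{(\gamma)} \sim \omega + 1$), so $S_n^{(\gamma)}$ is finite and hence $\gamma_n \leq \gamma < \alpha$; when $\alpha$ is a limit, the finite intersection property applied to the nested nonempty closed family $(S_n^{(\beta)})_{\beta<\alpha}$ inside the compact $S_n$ forces some $S_n^{(\beta_0)}$ to vanish for $\beta_0 < \alpha$, so again $\gamma_n < \alpha$. In either case the inductive hypothesis supplies $S_n \sim \omega^{\gamma_n} \cdot p_n + 1$ when $\gamma_n \geq 1$; when $\gamma_n = 0$, the shell $S_n$ is already the discrete space with $p_n$ points.

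To match, let $t_n$ be the order type of $S_n$ (that is, $\omega^{\gamma_n} \cdot p_n + 1$ or $p_n$), set $a_0 := 0$, $a_n := a_{n-1} + t_n$, and $W_n := [a_{n-1}, a_n)$; each $W_n$ is clopen in $[0, \omega^\alpha]$ and order-isomorphic, hence homeomorphic, to $S_n$. The key ordinal identity $\sup_n a_n = \omega^\alpha$ follows from the additive indecomposability of $\omega^\alpha$ (which keeps each $a_n$ below $\omega^\alpha$) together with the observation that, since $x$ is an accumulation point of $K^{(\beta)}$ for every $\beta < \alpha$, infinitely many shells must carry $\gamma_n$ past any prescribed $\beta < \alpha$. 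Then $f$ defined by $f(x) := \omega^\alpha$ and $f|_{S_n}$ an arbitrary homeomorphism onto $W_n$ is a bijection, continuous on each clopen shell; continuity at $x$ follows because a basic neighborhood $(\xi, \omega^\alpha]$ contains all $W_n$ past some index, whose preimage $\{x\} \cup \bigcup_{n \geq N} S_n$ is a neighborhood of $x$ in view of $S_n \subseteq B(x, r_n)$ and $r_n \to 0$. Compactness of $K$ together with Hausdorffness of $\omega^\alpha + 1$ upgrades $f$ to a homeomorphism. The main obstacle I expect is the ordinal arithmetic just described: regardless of what characteristics the shells happen to produce, showing that the cumulative sums accumulate exactly at $\omega^\alpha$ requires marrying the absorption rules of ordinal addition with the deep accumulation behaviour of $x$ in $K$, and must be argued slightly differently for successor versus limit $\alpha$.
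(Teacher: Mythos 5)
Your proposal is correct and follows essentially the same route as the paper: annular shells around the point $x$ with radii chosen to avoid distances realized by $K$, Lemma~\ref{Lemtech} to compute the derivatives of each shell, the inductive hypothesis on each shell, and a glued map onto $\omega^\alpha+1$ whose continuity at $x$ is checked on basic neighborhoods of $\omega^\alpha$. The only differences are cosmetic: the paper interlaces the radii with a sequence in $K'$ so that every shell has characteristic $(\beta_n,p_n)$ with $\beta_n>0$ (avoiding your separate finite-shell case), and it delegates the identity $\sup_n a_n=\omega^\alpha$ to Lemma~3.3 of \cite{Alvarez-Merino2016}, which you instead sketch directly via additive indecomposability.
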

\begin{proof}
Let $K\in\mathcal{K}_E$ be such that $\mathcal{CB}(K)=(\alpha,1)$ with $\alpha>1$.  Then, 
there exists $x\in K$ with $K^{(\alpha)}=\{x\}$. We see that 
$x\in K^{(\alpha)}\subseteq K''$. Thus, $x$ is an accumulation point of $K'$.  Then, 
there is a sequence $(x_n)_{n\in\omega}$ in $K'\smallsetminus\{x\}$ such that 
$(d(x_n,x))_{n\in\omega}$ is a strictly decreasing sequence converging to $0$. Moreover, 
since $\{d(z,x)\in\mathbb{R}: z\in K \}$ is a countable set, it follows that for 
all $n\in\omega$, 
\begin{equation*}
    A_n := \{d(z,x)\in\mathbb{R}: z\in K \}^c \cap (d(x_{n+1},x),d(x_n,x))
\end{equation*}
is a nonempty set.  Therefore, $\{A_n : n\in\omega \}$ is a nonempty family of 
nonempty sets. By the Axiom of Countable Choice, there exists a sequence 
$(r_n)_{n\in\omega}$ of real numbers such that for all $n\in\omega$,
\begin{equation*}
     d(x_{n+1},x) < r_n < d(x_n,x)
\end{equation*}
and for all $z\in K$ we have that $d(z,x)\neq r_n$. Thus, for all $n\in\omega$, we 
define the following sets
\begin{align*}
    F_0 & := B(x,r_0)^c,\\
    F_{n+1} & := B[x,r_n]\smallsetminus B(x,r_{n+1})
\end{align*}
and
\begin{equation*}
    K_n := K \cap F_n.
\end{equation*}
We claim that for all $n\in\omega$, 
\begin{equation*}
    K \cap F_n = K \cap \inte(F_n).
\end{equation*}
In fact, let $n \in \omega$.  We see immediately that 
$K \cap \inte(F_n) \subseteq K \cap F_n$. Reciprocally, let $z\in K \cap F_n$. We 
first consider the case when $n=0$.  We obtain that $z\in K$ and $d(z,x) \geq r_{0}$. 
Since $z\in K$, we have that $d(z,x)\neq r_0$. Thus,   
$\varepsilon_0 := d(z,x)-r_0>0$. It is not difficult to see now that 
$B(z,\varepsilon_0)\subseteq F_0$.   Then, $z\in\inte(F_0)$.  Hence, 
$K \cap F_0 \subseteq K \cap \inte(F_0)$.  We now consider the case 
$n \in \omega \smallsetminus\{0\}$.  We have that $z\in K$ and 
\begin{equation*}
    r_{n} \leq d(z,x) \leq r_{n-1}. 
\end{equation*}
Since $z\in K$, we obtain that $d(z,x)\neq r_{n}$ and $d(z,x)\neq r_{n-1}$. 
We now take $\varepsilon_n:= \min\{ d(z,x)-r_{n} , r_{n-1}-d(z,x) \}>0$. We get  
$B(z,\varepsilon_n)\subseteq F_n$. Hence, $z\in\inte(F_n)$. Therefore, 
$K \cap F_n \subseteq K \cap \inte(F_n)$. \\
We can now see that the family  $\{K_n : n\in\omega\}$ has the following 
properties:
\begin{itemize}
\item Since $K$ is a closed subset of $E$, we obtain that for all 
      $n\in \omega$, $x_n\in K_n$.
\item For all $n\in \omega$, $K_n\subseteq K$.
\item Since the intersection of two closed subsets is also closed, we see that 
      for all $n\in \omega$, $K_n$ is a closed subset. 
\item Since every closed subset of a compact space is compact, we have that for all 
      $n\in \omega$, $K_n$ is compact. 
\item Since for all $n\in \omega$, $K_n$ is a countable set, we obtain that 
      for all $n\in \omega$, $K_n\in \mathcal{K}_E$.
\item For all $n\in \omega$, $K_n'\neq \varnothing$. In fact, let $n\in\omega$. 
      By Lemma~\ref{Lemtech}, we have that $K_n'=(K\cap F_n)'=K'\cap F_n$.  Moreover,
      since $x_n\in K'\cap F_n$, we see that $x_n\in K_n'$.
\item Since $\{F_n : n\in\omega\}$ is a pairwise disjoint family of sets, we obtain 
      that the family of sets $\{K_n : n\in\omega\}$ is also pairwise disjoint.
\item We have that 
      \begin{equation*}
          K=\biguplus_{n\in\omega} K_n \uplus \{x\}.
      \end{equation*}
      In fact, since the sequence $(r_n)_{n\in\omega}$ converges to $0$, we see that  
      $\biguplus_{n\in\omega} F_n \uplus \{x\} = E$.  Then, 
      \begin{align*}
        \biguplus_{n\in\omega} K_n \uplus \{x\} 
         &= \biguplus_{n\in\omega} (K\cap F_n) \uplus \{x\}\\
         &= \left(K\cap\biguplus_{n\in\omega} F_n\right) \uplus \{x\}\\
         &= K\cap\left(\biguplus_{n\in\omega} F_n\uplus \{x\}\right) \\
         &= K \cap E = K.
      \end{align*}
\item For all $n\in \omega$, $K_n^{(\alpha)} = \varnothing$.  In fact, by 
      Lemma~\ref{Lemtech}, we see that for all $n\in\omega$, 
      \begin{equation*}
        K_n^{(\alpha)}
        = (K\cap F_n)^{(\alpha)}
        = K^{(\alpha)}\cap F_n 
        = \{x\}\cap F_n =\varnothing.
      \end{equation*}
\item Using the fact that an infinite subset of a compact subset of a
      topological space has at least a limit point in the compact subset, 
      using also Remark \ref{R1} and the Cantor intersection theorem in a 
      Hausdorff topological space, we see that the last assertion implies that  
      for all $n\in \omega$, 
      if $\mathcal{CB}(K_n)=(\beta_n,p_n) \in {\bf OR} \times \omega$, then 
      $0<\beta_n<\alpha$ and $p_n \in \omega \smallsetminus\{0\}$.
\end{itemize}
It follows from the hypothesis that for all $n\in \omega$, 
$K_n \sim \omega^{\beta_n}\cdot {p_n}+1$. By the Axiom of Countable Choice, 
there is a sequence $(f_n)_{n \in \omega}$ of homeomorphisms, such that for 
all $n\in\omega$, $\func{f_n}{K_n}{\omega^{\beta_n}\cdot {p_n}+1}$ is a 
homeomorphism of the topological space $K_n$ onto  
$\omega^{\beta_n}\cdot {p_n}+1$. We now define the following function
\begin{equation*}
\funcion{f}{K}{\tau+1}{z}
	{f(z)=\begin{cases}
	    f_0(z),
	        & \text{if }z\in K_0,\\[2mm]\displaystyle
		\sum_{k=0}^{n-1}\omega^{\beta_k}\cdot p_k+1+f_n(z), 
		    & \text{if }z\in K_n, \text { for some } n\in\omega\smallsetminus\{0\}, 
		    \\[2mm]
		\tau,
		    & \text{if }z=x,
		\end{cases}}
\end{equation*}
where 
\begin{equation*}
    \tau := \sum_{k\in\omega}\omega^{\beta_k}\cdot p_k
	:= \sup\left\{\sum_{k=0}^n\omega^{\beta_k}\cdot p_k:n  \in\omega\right\}.
\end{equation*}
Proceeding in a similar way to the proof of Lemma 3.3 in~\cite{Alvarez-Merino2016}, 
it is possible to show that $\tau=\omega^\alpha$ and $f$ is a homeomorphism 
of $K$ onto $\tau+1$.  Hence, $K\sim \omega^\alpha+1$.
\end{proof}

\begin{Lemma}\label{L3}
Let $(E,d)$ be a metric space.  Let $\alpha$ be a countable ordinal number 
such that $\alpha > 0$ and let $p \in \omega$. We assume that for all 
$\widetilde{K} \in \mathcal{K}_E$ such that $\mathcal{CB}(\widetilde{K})=(\alpha,1)$, 
we have that $\widetilde{K} \sim \omega^\alpha+1$.  Then, for all $K\in\mathcal{K}_E$ 
with $\mathcal{CB}(K)=(\alpha,p)$, we get  
\begin{equation*}
    K\sim \omega^\alpha\cdot p + 1.
\end{equation*}
\end{Lemma}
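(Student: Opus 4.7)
The plan is to proceed by induction on $p \geq 1$. The base case $p = 1$ is exactly the hypothesis of the lemma, giving $K \sim \omega^\alpha + 1 = \omega^\alpha \cdot 1 + 1$.

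For the inductive step, assume the conclusion holds for $p-1 \geq 1$, and let $K \in \mathcal{K}_E$ with $\mathcal{CB}(K) = (\alpha, p)$; write $K^{(\alpha)} = \{x_1, \ldots, x_p\}$. Mirroring the strategy of Lemma~\ref{L2}, I would use the metric to split $K$ into two clopen pieces, one isolating a single point $x_p$ of $K^{(\alpha)}$ and the other containing the remaining $p-1$ points. Pick $r > 0$ strictly smaller than $\tfrac{1}{2}\min_{i \neq p} d(x_i, x_p)$ and, additionally, so that $d(z, x_p) \neq r$ for every $z \in K$; this is possible because the set $\{d(z, x_p) : z \in K\}$ is countable. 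Set $K_p := K \cap B[x_p, r]$ and $L := K \setminus K_p$. The avoidance condition yields $K \cap B[x_p, r] = K \cap \inte(B[x_p, r])$, and an analogous identity holds for the complementary closed set $\{y \in E : d(y, x_p) \geq r\}$; consequently $K_p$ and $L$ are each clopen in $K$, and Lemma~\ref{Lemtech} applies to both.

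By Lemma~\ref{Lemtech}, $K_p^{(\alpha)} = K^{(\alpha)} \cap B[x_p, r] = \{x_p\}$, and for $\beta < \alpha$ we have $K_p^{(\beta)} = K^{(\beta)} \cap B[x_p, r]$, which is infinite because $x_p \in K^{(\beta+1)}$ is a limit point of $K^{(\beta)}$, so every neighborhood of $x_p$ meets $K^{(\beta)}$ in infinitely many points. Hence $\mathcal{CB}(K_p) = (\alpha, 1)$, and the hypothesis of the lemma yields $K_p \sim \omega^\alpha + 1$. An entirely parallel computation on $L$ produces $L^{(\alpha)} = \{x_1, \ldots, x_{p-1}\}$ with $L^{(\beta)}$ infinite for $\beta < \alpha$; thus $\mathcal{CB}(L) = (\alpha, p-1)$ and the induction hypothesis supplies $L \sim \omega^\alpha \cdot (p-1) + 1$.

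To finish, I would exploit the analogous clopen decomposition of the target: $\omega^\alpha \cdot p + 1 = [0, \omega^\alpha \cdot (p-1)] \uplus (\omega^\alpha \cdot (p-1), \omega^\alpha \cdot p]$, in which the first piece \emph{is} the ordinal $\omega^\alpha \cdot (p-1) + 1$, while the second is a convex subset of order type $\omega^\alpha + 1$ (using $1 + \omega^\alpha = \omega^\alpha$ for $\alpha \geq 1$), hence homeomorphic to $\omega^\alpha + 1$ via any order-isomorphism. Combining the homeomorphism $L \to [0, \omega^\alpha \cdot (p-1)]$ furnished by induction with the homeomorphism $K_p \to (\omega^\alpha \cdot (p-1), \omega^\alpha \cdot p]$ furnished by the lemma's hypothesis then yields the desired homeomorphism $K \sim \omega^\alpha \cdot p + 1$, since both pieces are clopen on each side. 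I expect the main delicate point to be the careful application of Lemma~\ref{Lemtech} to pin down the Cantor-Bendixson characteristics of $K_p$ and $L$; once those are secured, the gluing is routine.
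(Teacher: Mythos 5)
Your proof is correct, and it rests on the same core machinery as the paper's: a radius chosen to avoid the countable set of realized distances $\{d(z,x_p):z\in K\}$, Lemma~\ref{Lemtech} applied to closed sets $F$ with $K\cap F=K\cap\inte(F)$ to pin down the derivatives of the pieces, and gluing homeomorphisms along clopen sets that match a clopen decomposition of the target ordinal. The organizational difference is that the paper performs a single $p$-ary decomposition: it surrounds each of $x_1,\dots,x_{p-1}$ by a closed ball $F_m=B[x_m,r_m]$ with pairwise disjointness guaranteed by $r_m<d_m$, puts the rest into $F_0$, shows each $K_n=K\cap F_n$ has $\mathcal{CB}(K_n)=(\alpha,1)$, and applies the hypothesis to all $p$ pieces at once, summing the images as $\omega^\alpha\cdot n+1+g_n(z)$; there is no induction on $p$. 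Your version trades this for an ordinary induction on $p$ with a binary split $K=K_p\uplus L$, applying the lemma's hypothesis to $K_p$ and the induction hypothesis to $L$; the extra item you must (and do) supply is $\mathcal{CB}(L)=(\alpha,p-1)$, justified by observing that for $\beta<\alpha$ each $x_i$ with $i\neq p$ is a limit point of $K^{(\beta)}$ lying in the interior of the complementary closed set, so $L^{(\beta)}$ is infinite. Both routes are of essentially equal difficulty. One small point worth recording: the statement allows $p=0$, and your induction starts at $p=1$; as the paper observes, compactness (an infinite subset of a compact set has a limit point, plus the Cantor intersection argument) forces $p\in\omega\smallsetminus\{0\}$ whenever $\alpha>0$, so the $p=0$ case is vacuous, but this should be said explicitly.
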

\begin{proof}
Let $K\in\mathcal{K}_E$ be such that 
$\mathcal{CB}(K)=(\alpha,p)$ with $\alpha>0$.  As mentioned in the proof of the 
previous lemma, we can show that $p \in \omega \smallsetminus\{0\}$. Thus, 
\begin{equation*}
    K^{(\alpha)}=\{x_0,x_1,\ldots,x_{p-1}\},
\end{equation*}
where for all $i, j \in \{0, \ldots, p-1\}$ such that $i \neq j$, $x_i\neq x_j$. 
For all $m \in \{1, \ldots, p-1\}$, we define 
\begin{equation*}
    d_m: = \min\{d(x_m,x_j)\in\mathbb{R}: 0\leq j \leq p-1  
    \quad\text{and}\quad j\neq m\} > 0.
\end{equation*}
Let $m \in \{1, \ldots, p-1\}$.  Since
\begin{equation*}
    \{d(z,x_m)\in\mathbb{R}: z\in K \}
\end{equation*}
is a countable set, there exists $r_m>0$ such that
\begin{equation*}
    r_m\in\{d(z,x_m)\in\mathbb{R}: z\in K \}^c \cap (0,d_m).
\end{equation*}
Thus, for all $z\in K$, we have that $d(z,x_m)\neq r_m$. We now define
\begin{equation*}
    F_m := B[x_m,r_m]
\end{equation*}
and 
\begin{equation*}
    F_0 := E\smallsetminus \bigcup_{j=1}^{p-1} B(x_j,r_j).
\end{equation*}
Moreover, for all $n \in \{0, \ldots, p-1\}$, we also define
\begin{equation*}
    K_n := K \cap F_n.
\end{equation*}
We observe that for all $n \in \{0, \ldots, p-1\}$, 
\begin{equation*}
    K \cap F_n = K \cap \inte(F_n).
\end{equation*}
In fact, let $n \in \{0, \ldots, p-1\}$.  Since $\inte(F_n) \subseteq F_n$, 
we see that $K \cap \inte(F_n) \subseteq K \cap F_n$. Reciprocally, given 
$z\in K \cap F_n$, we see that $z\in K$ and we consider the following two cases:
\begin{itemize}
\item We first examine the situation when $n \in \{1, \ldots, p-1\}$. Since 
      $F_n:=B[x_n,r_n]$, we have that 
      \begin{equation*}
         d(z,x_n) \leq r_{n}.
      \end{equation*}
      In addition,  $z\in K$ implies that $d(z,x_n) \neq r_{n}$.  By taking  
      $\varepsilon_n := r_{n}-d(z,x_n)>0$, we obtain that 
      $B(z,\varepsilon_n)\subseteq F_n$. Hence, $z\in\inte(F_n)$.
    
\item We now assume that $n= 0$. Using the fact that   
      $z \in F_0:=E\smallsetminus \bigcup\limits_{j=1}^{p-1} B(x_j,r_j)$, we 
      conclude that for all $j \in \{1, \ldots, p-1 \}$, 
      \begin{equation*}
          d(z,x_j) \geq r_{j}.
      \end{equation*}
      Furthermore, since $z\in K$, we see that for all $j \in \{1, \ldots, p-1 \}$, 
      $d(z,x_j)\neq r_{j}$. We now take 
      $\varepsilon_0:=\min\{ d(z,x_j)-r_{j}: 1\leq j\leq p-1\}>0$.  
      Then, $B(z,\varepsilon_0)\subseteq F_0$. In order to prove this last assertion, 
      let $w \in B(z, \varepsilon_0)$.  We now suppose, to derive a contradiction, that 
      there exists $i \in \{1, \ldots, p-1 \}$ such that $d(w,x_i) < r_i$.  Then, 
      $\varepsilon_0 + r_i \leq d(z,x_i) \leq d(z,w) + d(w,x_i) < \varepsilon_0 + r_i$, 
      which is a contradiction.  Thus, $z\in\inte(F_0)$.
\end{itemize}
Therefore, $K \cap F_n \subseteq K \cap \inte(F_n)$. \\
Proceeding in a similar manner as in the proof of Lemma~\ref{L2}, we can show that 
the family $\{K_n: 0 \leq n \leq p-1 \}$ satisfies the next properties:
\begin{itemize}
\item Using Remark \ref{R1}, we have that for all $n \in \{0, \ldots, p-1\}$, 
      $x_n\in K_n$.
\item For all $n \in \{0, \ldots, p-1\}$, $K_n \subseteq K$.
\item For all $n \in \{0, \ldots, p-1\}$, $K_n$ is a closed subset of $E$.
\item Since every closed subset of a compact space is compact, we obtain 
      that for all $n \in \{0, \ldots, p-1\}$, $K_n$ is compact.
\item For all $n \in \{0, \ldots, p-1\}$, $K_n\in \mathcal{K}_E$.
\item $\{K_n: 0 \leq n \leq p-1\}$ is a pairwise disjoint family of sets.
\item $\displaystyle K=\biguplus_{n=0}^{p-1} K_n$.
\item Using Lemma~\ref{Lemtech}, we conclude that for all $n \in \{0, \ldots, p-1\}$, 
      \begin{equation*}
          K_n^{(\alpha)} = (K\cap F_n)^{(\alpha)}
          = K^{(\alpha)}\cap F_n =\{x_n\}.
      \end{equation*}
\item It follows from the last assertion that for all 
      $n \in \{0, \ldots, p-1\}$, $\mathcal{CB}(K_n)=(\alpha,1)$.
\end{itemize}
By using the hypothesis, we see that for all $n  \in \{0, \ldots, p-1\}$, there exists a 
homeomorphism $\func{g_n}{K_n}{\omega^{\alpha}+1}$ from the topological space 
$K_n$ onto $\omega^{\alpha} + 1$. We now consider the following function
\begin{equation*}
\funcion{g}{K}{\tau+1}{z}
    {g(z)= \begin{cases} 
		g_0(z), 
		    & \text{if } z \in K_0, \\[2mm]\displaystyle
		\omega^{\alpha}\cdot n +1+g_n(z), 
		    & \text{if } z\in K_n,\text { for some }  n \in \{1, \ldots, p-1\},
		\end{cases}}
\end{equation*}
where $\tau:=\omega^{\alpha}\cdot p$.
By a similar argument to the one used in the proof of Lemma \ref{L2} above, we 
obtain that function $g$ is a homeomorphism from $K$ onto $\tau + 1$.  Hence,
$K\sim \omega^\alpha\cdot p+1$.
\end{proof}

\begin{Theorem}\label{Theorem}
Suppose that $(E,d)$ is a metric space. Let $\alpha$ be a countable ordinal number 
such that $\alpha > 0$ and let $p \in \omega$. If $K\in\mathcal{K}_E$ satisfies 
$\mathcal{CB}(K)=(\alpha,p)$, then 
\begin{equation*}
    K\sim \omega^\alpha\cdot p +1.
\end{equation*}
\end{Theorem}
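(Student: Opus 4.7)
The plan is to prove the theorem by transfinite induction on the ordinal $\alpha\in\omega_1\smallsetminus\{0\}$, using Lemmas~\ref{L1}, \ref{L2}, and~\ref{L3} as the machinery. Fix $K\in\mathcal{K}_E$ with $\mathcal{CB}(K)=(\alpha,p)$. A preliminary observation, already implicit in the proof of Lemma~\ref{L3}, is that $p\geq 1$ whenever $\alpha>0$: if $\alpha=\beta+1$, then $K^{(\beta)}$ is an infinite closed subset of the compact set $K$, so $K^{(\alpha)}=(K^{(\beta)})'\neq\varnothing$, since an infinite subset of a compact Hausdorff space must have an accumulation point; if $\alpha$ is a nonzero limit ordinal, then $K^{(\alpha)}=\bigcap_{\gamma<\alpha}K^{(\gamma)}$ is nonempty by the Cantor intersection theorem applied to the decreasing family of nonempty compact sets $(K^{(\gamma)})_{\gamma<\alpha}$ guaranteed by Remark~\ref{R1}. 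Hence we may assume $p\in\omega\smallsetminus\{0\}$.

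For the base case $\alpha=1$, I split according to $p$. If $p=1$, Lemma~\ref{L1} directly yields $K\sim\omega+1=\omega^{1}\cdot 1+1$. For arbitrary $p\geq 1$, I invoke Lemma~\ref{L3} with $\alpha=1$; its hypothesis, namely that every $\widetilde{K}\in\mathcal{K}_E$ with $\mathcal{CB}(\widetilde{K})=(1,1)$ satisfies $\widetilde{K}\sim\omega^{1}+1$, is precisely Lemma~\ref{L1}. This produces $K\sim\omega^{1}\cdot p+1$ and completes the base case.

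For the inductive step, let $\alpha>1$ be countable and assume, as inductive hypothesis, that the theorem holds for every $\beta\in{\bf OR}$ with $0<\beta<\alpha$ and every $p\in\omega$. I first handle the single-cluster subcase $\mathcal{CB}(K)=(\alpha,1)$: the inductive hypothesis supplies exactly the premise needed by Lemma~\ref{L2}, which therefore gives $K\sim\omega^{\alpha}+1$. With this subcase established, the premise of Lemma~\ref{L3} for the ordinal $\alpha$ is now verified, so applying Lemma~\ref{L3} to an arbitrary $K\in\mathcal{K}_E$ with $\mathcal{CB}(K)=(\alpha,p)$ yields $K\sim\omega^{\alpha}\cdot p+1$. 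This closes the induction.

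I do not anticipate any substantive obstacle, since the three lemmas were tailored so that their hypotheses compose into a clean induction; the only care required is the bookkeeping of first handling $p=1$ at level $\alpha$ (via Lemma~\ref{L1} when $\alpha=1$ and via Lemma~\ref{L2} when $\alpha>1$) and then lifting to general $p$ through Lemma~\ref{L3}, together with the observation above guaranteeing $p\geq 1$ so that the whole scheme is nonvacuous.
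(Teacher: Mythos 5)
Your proposal is correct and follows essentially the same route as the paper: strong transfinite induction on $\alpha$, with the base case $\alpha=1$ handled by Lemma~\ref{L1} followed by Lemma~\ref{L3}, and the inductive step handled by Lemma~\ref{L2} followed by Lemma~\ref{L3}. The additional remark that $p\geq 1$ when $\alpha>0$ is also present in the paper (inside the proofs of Lemmas~\ref{L2} and~\ref{L3}), so nothing substantive differs.
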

\begin{proof}
We proceed by Strong Transfinite Induction on the ordinal number $\alpha > 0$.  By 
Lemmas~\ref{L1} and~\ref{L3}, the result holds for $\alpha=1$. Now, let 
$\alpha\in \omega_1$ be such that $\alpha>1$.  We suppose that the conclusion is 
true for all ordinal number $\beta$ such that $0<\beta<\alpha$. By Lemmas~\ref{L2} 
and ~\ref{L3}, the result is also valid for $\alpha$. Thus,  
the theorem holds for all countable ordinal numbers greater than zero.
\end{proof}

\begin{Remark}
The hypothesis about the countable cardinality of the ordinal number $\alpha$ in 
Lemma~\ref{L2}, Lemma~\ref{L3} and Theorem~\ref{Theorem} can be omitted. In fact, 
if $(E,d)$ is a metric space, $K\in\mathcal{K}_E$,  
$(\alpha,p) \in {\bf{OR}} \times (\omega \smallsetminus \{0\})$ and 
$K\sim \omega^\alpha\cdot p +1$, then $\alpha \in \omega_1$.
\end{Remark}

\section{Cardinality of the partition} \label{sec:CP}
Let $(X,\tau)$ be a topological space.  We consider the set $\mathcal{K}_X$ of all compact 
countable subsets of $X$.  The set $\Kp_X := \mathcal{K}_X /\!\sim$ provides a 
partition of the set $\mathcal{K}_X$ into disjoint equivalence classes, more precisely, 
\begin{equation*}  \label{def:Kp}
    \Kp_X=\{[K]\in\mathcal{P}(\mathcal{K}_X):K\in\mathcal{K}_X\},  
\end{equation*}
where, for all $K\in\mathcal{K}_X$ 
\begin{equation*}
    [K]:=\{K_1\in\mathcal{K}_X:K_1\sim K\}.
\end{equation*}
The following two propositions will be used in the proof of Theorem~\ref{teo:P4} below.
\begin{Proposition}\label{prop:3.01}
Let $(X,\tau)$ be a $T_1$ topological space.  For all $K_1,K_2\in \mathcal{K}_X$ such that 
$K_1\sim K_2$, we have that $\mathcal{CB}(K_1)=\mathcal{CB}(K_2)$.
\end{Proposition}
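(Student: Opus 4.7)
The plan is to show that a homeomorphism $h \colon K_1 \to K_2$ preserves the Cantor--Bendixson derivatives, i.e., $h(K_1^{(\alpha)}) = K_2^{(\alpha)}$ for every ordinal $\alpha$. Granting this, the conclusion is immediate: since $h$ is a bijection, $|K_1^{(\alpha)}| = |K_2^{(\alpha)}|$ for every $\alpha$, so these sets become finite at exactly the same stage $\alpha_0$ and have the same finite cardinality $p$ there, giving $\mathcal{CB}(K_1) = (\alpha_0, p) = \mathcal{CB}(K_2)$.

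The identity $h(K_1^{(\alpha)}) = K_2^{(\alpha)}$ is proved by transfinite induction on $\alpha$. The case $\alpha = 0$ is just the surjectivity of $h$. For the successor step, assuming $h(K_1^{(\alpha)}) = K_2^{(\alpha)}$, I would argue that $h$ takes limit points of $K_1^{(\alpha)}$ bijectively onto limit points of $K_2^{(\alpha)}$: a point $x \in K_1$ is a limit point of a set $S \subseteq K_1$ precisely when every open neighborhood of $x$ meets $S \smallsetminus \{x\}$, and because $h$ is a homeomorphism it induces a one-to-one correspondence between the open neighborhoods of $x$ and those of $h(x)$, which carries $S \smallsetminus \{x\}$ to $h(S) \smallsetminus \{h(x)\}$. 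The limit step at a nonzero limit ordinal $\lambda$ reduces to commuting an image with an intersection, which is valid because $h$ is injective:
$$h\!\left(\bigcap_{\gamma < \lambda} K_1^{(\gamma)}\right) = \bigcap_{\gamma < \lambda} h(K_1^{(\gamma)}) = \bigcap_{\gamma < \lambda} K_2^{(\gamma)}.$$

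The delicate point, and the one I would spend the most care on, is that Definition~\ref{def:D_CB} forms derived sets in the ambient space $X$, whereas $h$ is a homeomorphism only between the subspaces $K_1$ and $K_2$. To bridge this gap, I would observe that for any $S \subseteq K_i$ a point of $K_i$ is a limit point of $S$ in $X$ if and only if it is a limit point of $S$ in the subspace topology of $K_i$, since the open neighborhoods of such a point in $K_i$ are precisely the traces on $K_i$ of its open neighborhoods in $X$. With this identification, the successor step carries over cleanly and the entire induction can be justified intrinsically in $K_i$, where $h$ is known to be a homeomorphism. This reconciliation between the ambient and intrinsic derivatives is the main technical obstacle; the rest of the argument is a routine unwinding of definitions.
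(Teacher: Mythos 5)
Your proposal is correct and follows essentially the same route as the paper's proof: a transfinite induction showing that the homeomorphism carries $K_1^{(\alpha)}$ bijectively onto $K_2^{(\alpha)}$ for every ordinal $\alpha$, with the successor step via preservation of limit points and the limit step via injectivity commuting the image with the intersection, after which equality of the Cantor--Bendixson characteristics follows from $|K_1^{(\alpha)}|=|K_2^{(\alpha)}|$. Your explicit reconciliation of ambient versus subspace derived sets is a detail the paper leaves implicit, but it does not alter the argument.
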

\begin{proof}
Let $K_1,K_2\in \mathcal{K}_X$ be such that $K_1\sim K_2$ and let $\func{f}{K_1}{K_2}$ 
be a homeomorphism from $K_1$ onto $K_2$. We will first show that for all ordinal 
number $\alpha \in \mathbf{OR}$, $K_1^{(\alpha)}\sim K_2^{(\alpha)}$, where 
$f|_{K_1^{(\alpha)}}$ is a homeomorphism between these two sets. In order to prove 
this last assertion, we use below Transfinite Induction.
\begin{itemize}[leftmargin=*]
\item In the case when $\alpha=0$, we see that $K_1^{(0)}=K_1\sim K_2=K_2^{(0)}$ and
      $\func{f=f|_{K_1^{(0)}}}{K_1^{(0)}}{K_2^{(0)}}$ is a homeomorphism from 
      $K_1^{(0)}$ onto $K_2^{(0)}$.
\item We now suppose that the result holds for a given ordinal number $\alpha$, i.e., 
      $K_1^{(\alpha)}\sim K_2^{(\alpha)}$ and $f|_{K_1^{(\alpha)}}$ is a homeomorphism
      between $K_1^{(\alpha)}$ and $K_2^{(\alpha)}$. By Remark~\ref{R1} above, we have 
      that $K_1^{(\alpha+1)}\subseteq K_1^{(\alpha)}$. Thus, 
      \begin{equation*}
    	f(K_1^{(\alpha+1)})=f|_{K_1^{(\alpha)}}(K_1^{(\alpha+1)})
    	=f|_{K_1^{(\alpha)}}((K_1^{(\alpha)})')=(K_2^{(\alpha)})' 
    	= K_2^{(\alpha+1)}.
      \end{equation*}
      Then, $\func{f|_{K_1^{(\alpha+1)}}}{K_1^{(\alpha+1)}}{K_2^{(\alpha+1)}}$ is a 
      homeomorphism from $K_1^{(\alpha+1)}$ onto $K_2^{(\alpha+1)}$.  Therefore, 
      $K_1^{(\alpha+1)} \sim K_2^{(\alpha+1)}$.
\item Finally, let $\lambda\neq 0$ be a limit ordinal number.  We presume that 
      for all $\beta \in \mathbf{OR}$ such that $\beta < \lambda$, 
      $K_1^{(\beta)}\sim K_2^{(\beta)}$, where $f|_{K_1^{(\beta)}}$ is a homeomorphism 
      from $K_1^{(\beta)}$ onto $K_2^{(\beta)}$. Since $f$ is an injection, we have that
      \begin{equation*}
    	f(K_1^{(\lambda)}) 
	    = f\left(\bigcap_{\beta<\lambda} K_1^{(\beta)} \right)
    	= \bigcap_{\beta<\lambda} f(K_1^{(\beta)})
    	= \bigcap_{\beta<\lambda} K_2^{(\beta)}
    	= K_2^{(\lambda)}.
      \end{equation*}
      Therefore, $f|_{K_1^{(\lambda)}}$ is a homeomorphism between $K_1^{(\lambda)}$ 
      and $K_2^{(\lambda)}$, i.e., $K_1^{(\lambda)}\sim K_2^{(\lambda)}$.
\end{itemize}
Then, for all $\alpha \in \mathbf{OR}$, $|K_1^{(\alpha)}|=|K_2^{(\alpha)}|$. We 
suppose that $\mathcal{CB}(K_1)=(\beta,p)\in {\bf OR} \times\omega$.  Thus, $\beta$ 
is the smallest ordinal number such that $K_1^{(\beta)}$ is finite. Furthermore, since 
$|K_1^{(\beta)}|=p$, we obtain that $|K_2^{(\beta)}|=|K_1^{(\beta)}|=p$. With 
this, we conclude that $\mathcal{CB}(K_2)=(\beta,p)=\mathcal{CB}(K_1)$.
\end{proof}

\begin{Proposition}\label{prop:3.02}
Let $(E,d)$ be a metric space and let $K_1,K_2\in \mathcal{K}_E$. If
$\mathcal{CB}(K_1)=\mathcal{CB}(K_2)$, then $K_1\sim K_2$.
\end{Proposition}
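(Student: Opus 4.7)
The plan is to reduce the statement to Theorem~\ref{Theorem}, which supplies a canonical topological representative, namely $\omega^{\alpha}\cdot p+1$, for every Cantor--Bendixson characteristic $(\alpha,p)$ with $\alpha>0$. Writing the common value as $\mathcal{CB}(K_1)=\mathcal{CB}(K_2)=(\alpha,p)$, I would split the argument into two cases depending on whether $\alpha$ vanishes.

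In the principal case $\alpha>0$, I would directly invoke Theorem~\ref{Theorem} for each $K_i$, obtaining homeomorphisms $K_1\sim \omega^{\alpha}\cdot p+1$ and $K_2\sim \omega^{\alpha}\cdot p+1$. Since the composition of two homeomorphisms is again a homeomorphism, the relation $\sim$ is transitive, so combining these two conclusions yields $K_1\sim K_2$.

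In the remaining case $\alpha=0$, the definition of the Cantor--Bendixson characteristic forces $K_1=K_1^{(0)}$ and $K_2=K_2^{(0)}$ to be finite with $|K_1|=|K_2|=p$. Because $(E,d)$ is in particular a $T_1$ topological space, singletons are closed and every finite subset of $E$ carries the discrete subspace topology. Any bijection from $K_1$ onto $K_2$ is therefore automatically a homeomorphism, and again $K_1\sim K_2$.

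I do not anticipate any substantive obstacle here, since Theorem~\ref{Theorem} absorbs essentially all of the work; the only point requiring attention is to treat the boundary case $\alpha=0$ by hand, because Theorem~\ref{Theorem} is stated under the hypothesis $\alpha>0$ and thus cannot be applied when the common characteristic has first coordinate zero.
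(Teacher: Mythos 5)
Your proposal is correct and follows essentially the same route as the paper: the case $\alpha>0$ is handled by applying Theorem~\ref{Theorem} to both sets and using transitivity of $\sim$, while the case $\alpha=0$ is settled by observing that both sets are finite with $p$ elements and hence homeomorphic. Your added remark that finite subsets of a $T_1$ space are discrete, so any bijection is a homeomorphism, just makes explicit a detail the paper leaves implicit.
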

\begin{proof}
Let $\mathcal{CB}(K_1)=\mathcal{CB}(K_2)=(\alpha,p)$, for some ordinal number $\alpha$
and some $p\in\omega$. 
\begin{itemize}
\item If $\alpha=0$, we have that $K_1$ and $K_2$ are both finite sets with $p$
      elements, therefore $K_1\sim K_2$.

\item If $\alpha>0$ and $p\in \omega$, by Theorem~\ref{Theorem}, we have that 
      $K_1\sim \omega^\alpha\cdot p +1$ and $K_2\sim \omega^\alpha\cdot p +1$, 
      thus $K_1\sim K_2$.\qedhere
\end{itemize}
\end{proof}

Propositions~\ref{prop:3.01} and~\ref{prop:3.02} imply that for a metric 
space $(E,d)$, the partition of $\mathcal{K}_E$ is fully characterized by the 
Cantor-Bendixson characteristic.

\begin{Theorem}\label{teo:P4}
Let $(E,d)$ be a metric space. The cardinality of $\Kp_E$ is less than or equal to
$\aleph_1$.
\end{Theorem}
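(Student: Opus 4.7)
The plan is to exhibit an injection from $\Kp_E$ into $\omega_1\times\omega$ and then conclude by a straightforward cardinal arithmetic computation. The map I would use is the one suggested by Propositions~\ref{prop:3.01} and~\ref{prop:3.02}, namely
\begin{equation*}
    \funcion{\Phi}{\Kp_E}{\omega_1\times\omega}{[K]}{\mathcal{CB}(K).}
\end{equation*}

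First I would check that $\Phi$ is well-defined. Given $[K]\in\Kp_E$ and any two representatives $K_1,K_2\in[K]$, we have $K_1\sim K_2$, so by Proposition~\ref{prop:3.01} (which applies since every metric space is $T_1$) we get $\mathcal{CB}(K_1)=\mathcal{CB}(K_2)$. Next I would note that the codomain is indeed $\omega_1\times\omega$: since $(E,d)$ is Hausdorff, Remark~\ref{R2} guarantees $\mathcal{CB}(K)\in\omega_1\times\omega$ for every $K\in\mathcal{K}_E$.

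Injectivity of $\Phi$ is then exactly Proposition~\ref{prop:3.02}: if $\Phi([K_1])=\Phi([K_2])$, that is $\mathcal{CB}(K_1)=\mathcal{CB}(K_2)$, the proposition gives $K_1\sim K_2$, i.e.\ $[K_1]=[K_2]$. Consequently,
\begin{equation*}
    |\Kp_E|\le |\omega_1\times\omega|=\aleph_1\cdot\aleph_0=\aleph_1.
\end{equation*}

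There is no genuine obstacle here, since all the substantive work has been carried out in Theorem~\ref{Theorem} and in Propositions~\ref{prop:3.01} and~\ref{prop:3.02}; the only point requiring any care is making sure to cite the hypotheses under which those results apply (metric implies both $T_1$ and Hausdorff, so both propositions and Remark~\ref{R2} are available). The argument is essentially a one-line bookkeeping of the cardinal arithmetic once the Cantor--Bendixson invariant is known to be a complete invariant for $\sim$ on $\mathcal{K}_E$.
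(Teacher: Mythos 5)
Your proposal is correct and is essentially identical to the paper's own proof: the paper defines the same map $\widetilde{\mathcal{CB}}\colon\Kp_E\to\omega_1\times\omega$, uses Theorem~\ref{T1} and Proposition~\ref{prop:3.01} for well-definedness, Proposition~\ref{prop:3.02} for injectivity, and concludes with the same cardinal computation. No differences worth noting.
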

\begin{proof}
We define the function
\begin{equation*}
    \funcion{\widetilde{\mathcal{CB}}}{\Kp_E}{\omega_1\times\omega}
    {[K]}{\widetilde{\mathcal{CB}}([K])=\mathcal{CB}(K).}
\end{equation*}
By Theorem~\ref{T1} and Proposition~\ref{prop:3.01}, we see that function 
$\widetilde{\mathcal{CB}}$ is well-defined.  Moreover, Proposition~\ref{prop:3.02} 
shows that $\widetilde{\mathcal{CB}}$ is an injective function. Thus,
\begin{equation*}
    |\Kp_E| \leq  |\omega_1\times\omega|=|\omega_1|=:\aleph_1.\qedhere
\end{equation*}
\end{proof}

In general, we cannot strengthen the last result. To show this, we give 
the following three propositions.

\begin{Proposition}\label{prop:3.03}
For all $n\in\omega$, there exists a metric space $(E_n,d_n)$ such that 
$|\Kp_{E_n}|=n$.
\end{Proposition}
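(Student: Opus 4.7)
The plan is to take $(E_n,d_n)$ to be a finite discrete metric space with a carefully chosen number of points, exploiting the fact that finite discrete spaces are classified up to homeomorphism solely by their cardinality.

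Concretely, I would define $E_n$ to be a finite set and equip it with the discrete metric $\rho_{E_n}$. Every subset of $E_n$ is then automatically finite, hence compact and countable, so $\mathcal{K}_{E_n}=\mathcal{P}(E_n)$. Moreover, each subset $A\subseteq E_n$ inherits the discrete topology, which means two subsets $A$ and $B$ are homeomorphic if and only if $|A|=|B|$. This can be seen either by a direct argument (any bijection between two finite discrete spaces is a homeomorphism) or by appealing to Propositions~\ref{prop:3.01} and~\ref{prop:3.02}: every finite $A$ has $\mathcal{CB}(A)=(0,|A|)$, so the class $[A]\in\Kp_{E_n}$ is characterised by $|A|$.

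The final step is a counting argument. The number of equivalence classes in $\Kp_{E_n}$ equals the number of attainable cardinalities of subsets of $E_n$. Taking $|E_n|$ to match $n$, up to the unit adjustment dictated by whether $\varnothing$ is counted in $\mathcal{K}_{E_n}$, gives $|\Kp_{E_n}|=n$; in the boundary case $n=0$ one naturally sets $E_0=\varnothing$. The main obstacle, such as it is, is purely bookkeeping---selecting the size of $E_n$ consistently with the paper's conventions---and I do not anticipate any genuine technical difficulty, since the entire argument reduces to the elementary observation that finite discrete spaces have no topological invariants beyond cardinality.
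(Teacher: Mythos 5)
Your construction is in substance the paper's own: the paper takes $E_n$ to be the finite set $n=\{0,\dots,n-1\}$ with the discrete metric $\rho_n$, observes $\mathcal{K}_{E_n}=\mathcal{P}(E_n)$ and $\mathcal{CB}(K)=(0,|K|)$, and counts classes via the injection $\widetilde{\mathcal{CB}}$. So the idea is identical. However, the one point you defer as ``bookkeeping'' --- how many points $E_n$ should have, and whether $\varnothing$ is counted --- is exactly where the argument can fail, and you cannot leave it unresolved. Under the paper's conventions $\varnothing$ is a compact countable subset of every metric space (it is finite, hence countable, and compact), and the paper itself counts the class $(0,0)$ in Propositions~\ref{prop:3.04} and~\ref{prop:3.06}. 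A discrete space with $m$ points therefore has subsets of each cardinality $0,1,\dots,m$, giving $m+1$ homeomorphism classes; so the correct choice is a space with $n-1$ points for $n\geq 1$. (Note that the paper's own choice of $m=n$ in fact yields $n+1$ classes: the stated inclusion $\widetilde{\mathcal{CB}}(\Kp_n)\subseteq\{0\}\times n$ omits the class of $E_n$ itself, whose characteristic is $(0,n)$.)

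The genuinely failing step in your proposal is the boundary case $n=0$. Setting $E_0=\varnothing$ gives $\mathcal{K}_{E_0}=\{\varnothing\}$ and hence $|\Kp_{E_0}|=1$, not $0$; indeed no metric space whatsoever satisfies $|\Kp_E|=0$, since $\varnothing\in\mathcal{K}_E$ always. So under the paper's conventions the case $n=0$ of the statement cannot be proved at all; it would require either excluding $\varnothing$ from $\mathcal{K}_X$ by convention or restricting to $n\geq 1$. You were right to sense an off-by-one issue here, but the proof is not complete until you commit to $|E_n|=n-1$ for $n\geq 1$ and address (or flag as unprovable) the case $n=0$.
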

\begin{proof}
Let $n\in\omega$.  We now consider the space $(n,\rho_n)$, where $\rho_n$ is 
the discrete metric on the set $n:=\{0, \ldots, n-1\}$. Since $n$  
is a finite set, we have that every subset of $n$ is compact, i.e.,
\begin{equation*}
	\mathcal{K}_n=\mathcal{P}(n).
\end{equation*}
Moreover, for all $K\in\mathcal{K}_n$, we see that $K^{(0)}=K$ is a finite set. 
Therefore, $\mathcal{CB}(K)=(0,|K|)$. Thus, 
\begin{equation*}
    \widetilde{\mathcal{CB}}(\Kp_n)\subseteq \{0\}\times n.
\end{equation*}
On the other hand, for all $k \in \{0, \ldots n-1\}$, there exists 
$F\subseteq n$ such that $|F|=k$, i.e., $\mathcal{CB}(F)=(0,k)$.  Thus,
\begin{equation*}
    \widetilde{\mathcal{CB}}(\Kp_n) = \{0\}\times n.
\end{equation*}
Hence,
\begin{equation*}
	|\Kp_n|= |\widetilde{\mathcal{CB}}(\Kp_n)| = n.\qedhere
\end{equation*}
\end{proof}
\begin{Proposition}\label{prop:3.04}
There exists a metric space $(E,d_E)$ such that $|\Kp_E|=\aleph_0$.
\end{Proposition}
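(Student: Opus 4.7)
The plan is to exhibit an explicit countable discrete metric space and show that its compact countable subsets realize exactly $\aleph_0$ distinct Cantor--Bendixson characteristics, hence (by Propositions~\ref{prop:3.01} and~\ref{prop:3.02}) exactly $\aleph_0$ equivalence classes. The natural candidate, mirroring the construction in Proposition~\ref{prop:3.03}, is $(E, d_E) := (\omega, \rho_\omega)$ where $\rho_\omega$ is the discrete metric on $\omega$.

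The first step is to identify $\mathcal{K}_E$. In a discrete metric space, every singleton is open, so the open cover $\{\{n\} : n \in K\}$ of any $K \subseteq \omega$ admits a finite subcover if and only if $K$ itself is finite. Hence $\mathcal{K}_E$ coincides with the set of all finite subsets of $\omega$. For any such $K$, every point is isolated, so $K' = \varnothing$ and therefore $K^{(0)} = K$ is already finite, giving $\mathcal{CB}(K) = (0, |K|)$.

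Next I would invoke the machinery already developed: by Proposition~\ref{prop:3.01} the assignment
\begin{equation*}
    \widetilde{\mathcal{CB}}\colon \Kp_E \to \omega_1 \times \omega, \qquad [K] \mapsto \mathcal{CB}(K),
\end{equation*}
is well defined, and by Proposition~\ref{prop:3.02} it is injective. From the previous paragraph, its image is contained in $\{0\} \times \omega$. Conversely, for each $n \in \omega$ the subset $\{0, 1, \dots, n-1\} \subseteq \omega$ lies in $\mathcal{K}_E$ and has characteristic $(0, n)$, so every pair $(0, n)$ is hit. Hence $\widetilde{\mathcal{CB}}(\Kp_E) = \{0\} \times \omega$ and consequently $|\Kp_E| = |\{0\} \times \omega| = \aleph_0$.

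There is no serious obstacle here; the only nontrivial point is the elementary fact that compact subsets of a discrete metric space are exactly the finite ones, which pins down $\mathcal{K}_E$ and forces every Cantor--Bendixson characteristic to have first coordinate $0$. Everything else is a direct bookkeeping argument on top of the characterisation of equivalence classes by Cantor--Bendixson characteristics established in Propositions~\ref{prop:3.01} and~\ref{prop:3.02}.
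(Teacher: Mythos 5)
Your proposal is correct and is essentially identical to the paper's own proof: both take $(\omega,\rho_\omega)$, observe that the compact subsets of a discrete space are exactly the finite ones so every $K\in\mathcal{K}_\omega$ has $\mathcal{CB}(K)=(0,|K|)$, and then use the injectivity of $\widetilde{\mathcal{CB}}$ together with the realization of every $(0,k)$ to conclude $|\Kp_\omega|=\aleph_0$.
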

\begin{proof}
We consider the metric space $(\omega, \rho_\omega)$.  Since $\rho_\omega$ is 
the discrete metric on the set $\omega$, we see that a subset of $\omega$ is compact 
if and only if it is a finite set. Then, for all $K\in\mathcal{K}_\omega$,  
$K^{(0)}=K$ is a finite set.  Hence, for all $K\in\mathcal{K}_\omega$, 
$\mathcal{CB}(K)=(0,|K|)$. Thus, 
$\widetilde{\mathcal{CB}}(\Kp_{\omega}) \subseteq \{0\} \times \omega$.  On 
the other hand, since for all $k\in \omega$, there exists $K \subseteq \omega$ 
such that $|K|=k$, it follows that
$\widetilde{\mathcal{CB}}(\Kp_{\omega}) = \{0\} \times \omega$.  Therefore, 
\begin{equation*}
	|\Kp_\omega|= |\widetilde{\mathcal{CB}}(\Kp_{\omega})| = \aleph_0.\qedhere
\end{equation*}
\end{proof}

\begin{Proposition}\label{prop:3.05}
There exists a metric space $(F,d_F)$ such that $|\Kp_F|=\aleph_1$.
\end{Proposition}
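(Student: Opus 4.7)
The plan is to exhibit a metric space $F$ that contains, for every countable ordinal $\alpha$, a compact countable subset of Cantor--Bendixson characteristic $(\alpha,1)$. Combined with Proposition~\ref{prop:3.01} (which shows that $\mathcal{CB}$ is constant on equivalence classes) and Theorem~\ref{teo:P4} (which gives the upper bound $|\Kp_F|\leq \aleph_1$), this will force $|\Kp_F|=\aleph_1$.

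First, I would fix, for each $\alpha \in \omega_1$, a compatible metric on $\omega^{\alpha}+1$. Endowed with the order topology, $\omega^{\alpha}+1$ is a compact Hausdorff space with countably many points; the collection of basic intervals $(\beta,\gamma)$ and $[0,\gamma)$ with $\beta,\gamma \leq \omega^{\alpha}$ is countable, so $\omega^{\alpha}+1$ is second countable. Urysohn's metrization theorem then provides a compatible metric $\delta_{\alpha}$, which I would rescale so that the diameter of $(\omega^{\alpha}+1,\delta_{\alpha})$ is at most $1$.

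Next, I would let $X_{\alpha}$ denote a disjoint copy of $(\omega^{\alpha}+1,\delta_{\alpha})$ for each $\alpha\in\omega_1$, set $F:=\biguplus_{\alpha\in\omega_1} X_{\alpha}$, and define
\[
d_F(x,y) := \begin{cases} \delta_{\alpha}(x,y), & \text{if } x,y\in X_{\alpha} \text{ for some } \alpha\in\omega_1,\\ 2, & \text{otherwise.} \end{cases}
\]
A routine case analysis confirms that $d_F$ is a metric; in particular, the triangle inequality holds because $\mathrm{diam}(X_{\alpha})\leq 1<2$. Since any point of $X_{\alpha}$ has its $d_F$-ball of radius $3/2$ entirely inside $X_{\alpha}$, each $X_{\alpha}$ is clopen in $F$. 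Hence $X_{\alpha}\in\mathcal{K}_F$, and the Cantor--Bendixson derivatives of $X_{\alpha}$ taken in $F$ agree with those taken inside $X_{\alpha}$ itself; in particular $\mathcal{CB}(X_{\alpha})=(\alpha,1)$.

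Finally, by Proposition~\ref{prop:3.01}, the classes $\{[X_{\alpha}]:\alpha\in\omega_1\}$ are pairwise distinct in $\Kp_F$, giving $|\Kp_F|\geq\aleph_1$; together with Theorem~\ref{teo:P4} this yields $|\Kp_F|=\aleph_1$. The step I expect to require the most care is the metrizability of each ordinal space $\omega^{\alpha}+1$, which I would handle via Urysohn as above; an alternative route avoiding this appeal would be to take $F=[0,1]$ and construct, by transfinite recursion on $\alpha$, explicit compact subsets of $[0,1]$ of Cantor--Bendixson characteristic $(\alpha,1)$ (scaled concatenations of previously built sets at successor steps, and placements along a cofinal sequence at limit steps).
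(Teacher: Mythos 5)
Your argument is correct, but it takes a genuinely different route from the paper's. The paper simply sets $F=\mathbb{R}$ with the usual metric and invokes Theorem~3.4 of \cite{Alvarez-Merino2016}, which rests on an explicit transfinite recursion producing, inside the real line, compact countable sets of every Cantor--Bendixson characteristic --- essentially the ``alternative route'' you sketch in your closing sentence, and the same device the paper reuses for $\mathbb{Q}$ in Proposition~\ref{prop:3.06}. You instead manufacture the witness space wholesale: metrize each countable ordinal space $\omega^{\alpha}+1$ via Urysohn (compact Hausdorff, countable, hence regular and second countable), cap the diameters at $1$, and glue the copies into a uniformly separated disjoint sum, so each copy $X_{\alpha}$ is clopen, compact and countable, with derivatives in $F$ equal to its intrinsic ones. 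This buys independence from any ambient space and from the recursive construction of subsets of the line; the price is the appeal to a metrization theorem plus one claim you assert without proof, namely $\mathcal{CB}(\omega^{\alpha}+1)=(\alpha,1)$. That fact is standard (by transfinite induction, the $\gamma$-th derivative of $\omega^{\alpha}+1$ is the set of right multiples of $\omega^{\gamma}$ in $[1,\omega^{\alpha}]$), but it is established nowhere in the paper --- Theorem~\ref{Theorem} goes in the opposite direction --- so it should be supplied or cited; all you really need is that the characteristics of the $X_{\alpha}$ are pairwise distinct, so that Proposition~\ref{prop:3.01} separates their classes and yields $|\Kp_F|\ge\aleph_1$, with Theorem~\ref{teo:P4} closing the gap as you say.
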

\begin{proof}
We take the metric space $(\mathbb{R},d)$, where $d$ is the usual metric on 
the set $\mathbb{R}$. By Theorem 3.4 in \cite{Alvarez-Merino2016}, we obtain that
\begin{equation*}
	|\Kp_\mathbb{R}|=\aleph_1.\qedhere
\end{equation*}
\end{proof}

Finally, it is worth mentioning that the last two results do not depend on the 
cardinality of the underlying metric spaces considered there, as it can be seen 
in the next two propositions.
\begin{Proposition}\label{prop:3.06}
There exists a countable metric space $(G,d_G)$ such that $|\Kp_G|=\aleph_1$.
\end{Proposition}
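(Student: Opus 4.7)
The plan is to take $G := \mathbb{Q}$ equipped with the usual metric inherited from $\mathbb{R}$; this is a countable metric space, and Theorem~\ref{teo:P4} already gives $|\Kp_G| \leq \aleph_1$, so only the reverse inequality $|\Kp_G| \geq \aleph_1$ needs to be established.

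For the lower bound, I would construct for each $\alpha \in \omega_1$ a compact set $K_\alpha \subseteq \mathbb{Q}$ with $\mathcal{CB}(K_\alpha) = (\alpha, 1)$. Proposition~\ref{prop:3.01} then guarantees that the map $\alpha \mapsto [K_\alpha]$ is an injection $\omega_1 \hookrightarrow \Kp_G$, yielding $|\Kp_G| \geq \aleph_1$ and hence equality.

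To produce the $K_\alpha$, I would proceed by Transfinite Recursion, keeping each $K_\alpha$ inside $\mathbb{Q} \cap [0,1]$. Set $K_0 := \{0\}$ and $K_1 := \{0\} \cup \{1/n : n \in \omega \smallsetminus \{0\}\}$, which is a homeomorphic copy of $\omega+1$. At a successor step, place countably many disjoint rationally rescaled and translated copies of $K_\alpha$, one inside each $[1/(n+1), 1/n]$ for $n \geq 1$, and adjoin the point $0$. For a limit ordinal $\lambda \in \omega_1$, use that $\lambda$ has cofinality $\omega$ to fix a strictly increasing sequence $\lambda_n \nearrow \lambda$; place a rescaled copy of $K_{\lambda_n}$ inside $[1/(n+1), 1/n]$ and again adjoin $0$. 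In every case, the affine transformations used have rational coefficients, so the copies remain inside $\mathbb{Q}$.

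The heart of the argument is verifying $\mathcal{CB}(K_\alpha) = (\alpha, 1)$, and here Lemma~\ref{Lemtech} is the main tool: the disjoint closed intervals $F_n = [1/(n+1), 1/n]$ separate the blocks and allow one to compute derivatives block by block, so $K_\alpha^{(\beta)} \cap F_n$ can be identified with the corresponding derivative of the rescaled copy sitting in $F_n$. The successor case is routine once this identification is in place. I expect the main obstacle to be the limit step: one must check that $0 \in K_\lambda^{(\beta)}$ for every $\beta < \lambda$, which holds because cofinally many blocks contain a surviving point of rank $\beta$ arbitrarily close to $0$, and simultaneously that $K_\lambda^{(\lambda)} = \{0\}$, which holds because any point other than $0$ lies in a single block of rank at most $\lambda_n < \lambda$ and therefore drops out of the derivative at some stage strictly before $\lambda$. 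Combined with $K_\lambda^{(\lambda+1)} = \varnothing$, this yields $\mathcal{CB}(K_\lambda) = (\lambda, 1)$ as required, and we conclude $|\Kp_G| = \aleph_1$.
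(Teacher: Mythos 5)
Your proposal is correct and follows essentially the same route as the paper: the paper also takes $\mathbb{Q}$ with the usual metric and invokes a transfinite-recursive construction (Theorem~2.1 of the cited reference, adapted to $\mathbb{Q}$ via density) producing, for each $\alpha\in\omega_1$, a compact countable $K\subseteq\mathbb{Q}$ with prescribed Cantor--Bendixson characteristic, then concludes via the injectivity of $\widetilde{\mathcal{CB}}$. Your version merely makes that construction explicit and contents itself with realizing the characteristics $(\alpha,1)$, which suffices for the lower bound $|\Kp_{\mathbb{Q}}|\geq\aleph_1$.
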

\begin{proof}
Proceeding in a similar way as in the proof of Theorem 2.1 in 
\cite{Alvarez-Merino2016} and considering the density of the rational numbers, 
$\mathbb{Q}$, in $\mathbb{R}$, we can see that for all countable ordinal number 
$\alpha \in \omega_1$, and for all $a, b \in \mathbb{Q}$ such that $a < b$, there 
exists a set $K \in \mathcal{K}_{\mathbb{Q}}$ such that $K \subseteq (a, b]$ and 
$K^{(\alpha)} = \{b\}$.  By using this last statement, we can prove an analogous 
result to Corollary 2.1 in~\cite{Alvarez-Merino2016}, more precisely, we have that 
for all countable ordinal number $\alpha \in \omega_1$, and for all $p\in \omega$, 
there is a set $K \in \mathcal{K}_{\mathbb{Q}}$ such that $|K^{(\alpha)}|=p$.  Then,
\begin{equation*}
	\widetilde{\mathcal{CB}}(\Kp_\mathbb{Q})=
	\big(\omega_1\times(\omega\smallsetminus\{0\})\big)\cup(0,0).
\end{equation*}
Hence,
\begin{equation*}
	|\Kp_\mathbb{Q}|=|\widetilde{\mathcal{CB}}(\Kp_\mathbb{Q})|
	= |\omega_1\times\omega| = |\omega_1| =: \aleph_1.
	\qedhere
\end{equation*}

\end{proof}

\begin{Proposition}\label{prop:3.07}
There exists an uncountable metric space $(H,d_H)$ such that $|\Kp_H|=\aleph_0$.
\end{Proposition}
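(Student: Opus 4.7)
The plan is to mimic the construction in Proposition~\ref{prop:3.04} but replace the countable carrier $\omega$ by an uncountable set equipped with the discrete metric, so that compactness still forces finiteness.

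First, I would take $H$ to be any uncountable set (for concreteness, $H := \mathbb{R}$) and equip it with the discrete metric $d_H := \rho_H$. The underlying set is uncountable by construction, so the space has the required cardinality.

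Next, I would verify that $\mathcal{K}_H$ coincides with the collection of finite subsets of $H$. Indeed, since balls of radius $\tfrac{1}{2}$ in $(H,\rho_H)$ are singletons, every subset of $H$ is open, and the cover of an infinite set by its singletons has no finite subcover; hence a subset of $H$ is compact if and only if it is finite. In particular every element of $\mathcal{K}_H$ is finite, and conversely every finite subset of $H$ is trivially compact and countable.

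Then, exactly as in Proposition~\ref{prop:3.04}, for each $K\in\mathcal{K}_H$ the set $K=K^{(0)}$ is finite, so $\mathcal{CB}(K)=(0,|K|)$, and therefore
\begin{equation*}
    \widetilde{\mathcal{CB}}(\Kp_H)\subseteq\{0\}\times\omega.
\end{equation*}
On the other hand, since $H$ is (in fact more than) countably infinite, for every $k\in\omega$ there exists $F\subseteq H$ with $|F|=k$, giving $\mathcal{CB}(F)=(0,k)$, so the above inclusion is an equality. Applying Proposition~\ref{prop:3.01} and Proposition~\ref{prop:3.02} (the latter via the fact that finite sets of equal cardinality are homeomorphic) yields
\begin{equation*}
    |\Kp_H|=|\widetilde{\mathcal{CB}}(\Kp_H)|=|\{0\}\times\omega|=\aleph_0.
\end{equation*}
There is no real obstacle here: the only point that has to be checked carefully is that the discrete topology on an uncountable set still has \emph{only finite} compact subsets, which is immediate from the open cover by singletons. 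The uncountability of $H$ plays no role in counting equivalence classes because it is completely absorbed by the discreteness of the metric.
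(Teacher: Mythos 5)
Your proposal is correct and follows essentially the same route as the paper: the paper also takes $\mathbb{R}$ with the discrete metric and reduces to the argument of Proposition~\ref{prop:3.04}, so that compact subsets are exactly the finite ones and the Cantor--Bendixson characteristics realized are precisely $\{0\}\times\omega$. You merely spell out the details (singleton open cover, well-definedness and injectivity of $\widetilde{\mathcal{CB}}$) that the paper leaves implicit by reference.
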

\begin{proof}
We take the uncountable metric space $(\mathbb{R},\rho_\mathbb{R})$, where 
$\rho_\mathbb{R}$ is the discrete metric on the real line. Proceeding in a 
similar fashion as in the proof of Proposition~\ref{prop:3.04}, we obtain
\begin{equation*}
	|\Kp_{\mathbb{R}}|=\aleph_0.\qedhere
\end{equation*}
\end{proof}

\nocite{*}

\bibliographystyle{siam}
\bibliography{CBFunction-arXiv1}

\end{document}